\newtheorem{theorem}{Theorem}
\newtheorem{lemma}{Lemma}
\newtheorem{proposition}{Proposition}
\newtheorem{corollary}{Corollary}
\theoremstyle{definition}
\newtheorem{definition}{Definition}
\theoremstyle{remark}
\newtheorem{remark}{Remark}
\newcommand{\mix}{\mathcal{M}}
\newcommand{\vv}{\mathcal{V}}
\newcommand{\ms}{\operatorname{MS}}
\newcommand{\fin}{{\operatorname{fin}}}
\newcommand{\zz}{\mathbb{Z}}
\newcommand{\cc}{\mathbb{C}}
\newcommand{\aaa}{\mathcal{A}}
\newcommand{\qq}{\mathbb{Q}}
\newcommand{\triv}{{\bf 1}}
\newcommand{\Ind}{\operatorname{Ind}}
\newcommand{\abs}[1]{\left|{#1}\right|}
\newcommand{\rep}[1]{\langle{#1}\rangle}
\newcommand{\bc}{\operatorname{bc}}
\newcommand{\rec}{\operatorname{rec}}
\newcommand{\res}{\operatorname{res}}
\newcommand{\pp}{\mathcal{P}}
\newcommand{\oo}{\mathcal{O}}
\newcommand{\weil}{\mathcal{G}}
\newcommand{\sss}{\mathcal{S}}
\newcommand{\Hom}{\operatorname{Hom}}
\newcommand{\odd}{\operatorname{odd}}
\numberwithin{theorem}{section} \numberwithin{lemma}{section}
\numberwithin{corollary}{section}
\numberwithin{proposition}{section}
\numberwithin{equation}{section}
\author{Omer Offen and Eitan Sayag}
\title{The $SL(2)$-type and base change}
\date\today
\begin{document}\maketitle
\begin{abstract}
The $SL(2)$-type of any smooth, irreducible and unitarizable
representation of $GL_n$ over a $p$-adic field was defined by
Venkatesh. We provide a natural way to extend the definition to
all smooth and irreducible representations. For unitarizable
representations we show that the $SL(2)$-type of a representation
is preserved under base change with respect to any finite
extension. The Klyachko model of a smooth, irreducible and
unitarizable representation $\pi$ of $GL_n$ depends only on the
$SL(2)$-type of $\pi.$ As a consequence we observe that the
Klyachko model of $\pi$ and of its base-change are of the same
type.
\end{abstract}
\section{introduction}

Let $F$ be a finite extension of $\qq_{p}$. In \cite{MR2133760},
Venkatesh assigned a partition of $n,$ the \emph{$SL(2)$-type} of
$\pi,$ to any smooth, irreducible and unitarizable representation
$\pi$ of $GL_n(F).$ For a representation of Arthur type the $SL(2)$-type encodes the combinatorial data in the Arthur parameter. In general,
the $SL(2)$-type is
defined in terms of
Tadic's classification of the unitary dual.

The reciprocity map for $GL_n(F)$ is a bijection from the set of isomorphism classes of smooth irreducible representations of $GL_n(F)$ to the set of isomorphism classes of $n$-dimensional Weil-Deligne representations (cf. \cite{MR1876802} and \cite{MR1738446}). Applying the reciprocity map
we observe that there is a natural way to extend the
definition of the $SL(2)$-type to all smooth and irreducible
representations of $GL_n(F)$ (see Theorem \ref{thm: SL(2)-type} and Remark \ref{rmk: SL(2)-type}).
The reciprocity map also allows
the definition of base change with respect to any finite extension
$E$ of $F.$ It is a map $\bc_{E/F}$ from isomorphism classes of
smooth irreducible representation of $GL_n(F)$ to isomorphism classes of
smooth irreducible representation of $GL_n(E)$ that is the `mirror image'
of restriction with respect to $E/F$ of Weil-Deligne
representations.
The content of Theorem \ref{thm: main}, our main result,
is that for any smooth, irreducible and unitarizable representation $\pi$ of
$GL_n(F)$ the representations
%the representation $\bc_{E/F}(\pi)$ is unitarizable and
$\pi$ and $\bc(\pi)$ have the same $SL(2)$-type.

In \cite{MR2332593}, \cite{os}, \cite{OS3} we studied the Klyachko
models of smooth irreducible representations of $GL_{n}(F),$ that
is, distinction of a representation with respect to certain
subgroups that are a semi direct product of a unipotent and a
symplectic group. Our results are also described in terms of
Tadic's classification and depend, in fact, only on the
$SL(2)$-type of a representation. For example, a smooth,
irreducible and unitarizable representation $\pi$ of $GL_{2n}(F)$
is $Sp_{2n}(F)$-distinguished, i.e. it satisfies
$\Hom_{Sp_{2n}(F)}(\pi,\cc) \ne 0,$ if and only if the
$SL(2)$-type of $\pi$ consists entirely of even parts (and in this
case $\Hom_{Sp_{2n}(F)}(\pi,\cc)$ is one dimensional \cite[Theorem
2.4.2]{MR1078382}). For unitarizable representations, our results
on Klyatchko models are reinterpreted here in terms of the
$SL(2)$-type. As a consequence we show that Klyachko models are
preserved under base-change with respect to any finite extension.
In particular, we have
\begin{theorem}\label{thm: symplectic main}
Let $E/F$ be a finite extension of $p$-adic fields. A smooth, irreducible
and unitarizable representation $\pi$ of $GL_{2n}(F)$ is
$Sp_{2n}(F)$-distinguished if and only if $\bc_{E/F}(\pi)$ is
$Sp_{2n}(E)$-distinguished.
\end{theorem}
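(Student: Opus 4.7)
The plan is to reduce both sides of the biconditional to the combinatorial question of whether a certain partition of $2n$ consists entirely of even parts, and then to apply Theorem \ref{thm: main}.

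First I would invoke the criterion already recalled in the introduction, from \cite[Theorem 2.4.2]{MR1078382}, in its general form over any $p$-adic field $K$: a smooth, irreducible and unitarizable representation $\sigma$ of $GL_{2n}(K)$ satisfies $\Hom_{Sp_{2n}(K)}(\sigma,\cc)\ne 0$ if and only if its $SL(2)$-type consists entirely of even parts. Applying this with $(K,\sigma)=(F,\pi)$ and with $(K,\sigma)=(E,\bc_{E/F}(\pi))$ converts the two $Sp$-distinction conditions appearing in the statement into identical assertions about the parity of parts of the respective $SL(2)$-types.

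Next I would apply Theorem \ref{thm: main}: since $\pi$ is smooth, irreducible and unitarizable, the $SL(2)$-types of $\pi$ and $\bc_{E/F}(\pi)$ agree as partitions of $2n$. In particular, one of them has all parts even if and only if the other does, and chaining this equality with the two biconditionals from the previous step immediately yields the theorem.

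The only point I would verify carefully rather than treat as routine is that $\bc_{E/F}$ preserves the class of smooth, irreducible and unitarizable representations, so that both Venkatesh's $SL(2)$-type and the Klyachko criterion of \cite{MR1078382} are actually available on the $E$ side. This preservation is already built into the setup of Theorem \ref{thm: main}, via the definition of $\bc_{E/F}$ as restriction of Weil--Deligne parameters combined with Tadic's classification of the unitary dual, so it is a compatibility check rather than a new obstacle; apart from this, the argument is essentially a two-line deduction from results stated earlier in the paper.
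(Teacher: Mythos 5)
Your proposal is correct and follows essentially the same route as the paper, which obtains Theorem \ref{thm: symplectic main} as the special case $r(\pi)=0$ of Corollary \ref{cor: main}: by \eqref{eq: kly in terms of SL(2)-type} the condition $r(\pi)=0$ is exactly that $\vv(\pi)$ has only even parts, and Theorem \ref{thm: main} transports this across base change. The only quibble is attribution: the even-parts criterion for $Sp_{2n}$-distinction comes from the authors' own work on Klyachko models (encoded in \eqref{eq: kly in terms of SL(2)-type} together with the uniqueness of the Klyachko type), while \cite[Theorem 2.4.2]{MR1078382} is cited only for the multiplicity-one statement.
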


The rest of this note is organized as follows. After setting some
general notation in Section \ref{sec: notation}, in Section
\ref{sec: bc} we recall the definition of the reciprocity map.
In Section \ref{sec: SL(2)-type} we recall the definition of Venkatesh for the
$SL(2)$-type of a unitarizable representation and extend it to all smooth irreducible representations.
We recall (and reformulate in terms of the
$SL(2)$-type) our results on symplectic (and more generally on
Klyachko) models in Section \ref{sec: Klyachko}. Our main observation
Theorem \ref{thm: main} and its application to Klyachko
models Corollary \ref{cor: main} are stated in Section \ref{sec: statements} and proved in Section \ref{sec: proofs}. The main theorem says that
base change respects $SL(2)$-types and its corollary says that base change respects Klyachko types. Theorem \ref{thm: symplectic main} is a
special case where the Klyachko type is purely symplectic.

\section{Notation}\label{sec: notation}
Let $F$ be a finite extension of $\qq_p$ for some prime number $p$
and let $\abs{\,\cdot}_F:F^\times \to \cc^\times$ denote the
standard absolute value normalized so that the inverse of
uniformizers are mapped to the size of the residual field. Denote
by $W_F$ the Weil group of $F$ and by $I_F$ the inertia subgroup
of $W_F.$ We normalize the reciprocity map $T_F:W_F \to F^\times,$
given by local class field theory, so that geometric Frobenius
elements are mapped to uniformizers. The map $T_F$ defines an
isomorphism from the abelianization $W_F^{ab}$ of $W_F$ to
$F^\times$ (this is the inverse of the Artin map). Let
$\abs{\,\cdot}_{W_F}=\abs{\,\cdot}_{F}\circ T_F$ denote the
associated absolute value on $W_F.$
%Note that with our
%normalizations, for a finite extension $E/F$ we have
%\begin{equation}\label{eq: abs val}
 %  \abs{w}_{W_E}=\abs{w}_{W_F},\ w \in W_E.
%\end{equation}

Denote by $\triv_\Omega$ the characteristic function of a set
$\Omega.$ Let $\ms_\fin(\Omega)$ be the set of finite multisets of
elements in $\Omega,$ that is, the set of functions $f:\Omega \to
\zz_{\ge 0}$ of finite support. When convenient we will also denote
$f$ by $\{\omega_1,\dots,\omega_1,\omega_2,\dots,\omega_2,\dots\}$
where $\omega\in \Omega$ is repeated $f(\omega)$ times. Let
$\pp=\ms_\fin(\zz_{>0})$ be the set of partitions of positive
integers and let
\[
    \pp(n)=\{f \in \pp: \sum_{k=1}^\infty k\,f(k)=n\}
\]
denote the subset of partitions of $n.$ For $n,\,m \in \zz_{>0}$ let
$(n)_m=m\,\triv_n=\{n,\dots,n\}$ be the partition of $nm$ with `$m$
parts of size $n$'. Let $\odd:\pp\to \zz_{\ge 0}$ be defined by
\[
    \odd(f)=\sum_{k=0}^\infty f(2k+1),
\]
i.e. $\odd(f)$ is the number of odd parts of the partition $f.$

\section{Reciprocity and base-change for $GL_n(F)$}\label{sec: bc}

\subsection{Weil-Deligne representations}

An $n$-dimensional \emph{Weil-Deligne} representation is a pair
$((\rho,V),N)$ where $(\rho,V)$ is an $n$-dimensional representation
of $W_{F}$ that decomposes as a direct sum of irreducible
representations and $N:V \to V$ is a linear operator such that
\[
    \abs{w}_{W_F}\,N \circ  \rho(w)=\rho(w)\circ N,\ w \in
    W_F.
\]
The map $((\rho,V),N) \mapsto ([\rho],f),$ where $[\rho]$ denotes
the isomorphism class of the $n$-dimensional representation
$(\rho,V)$ of $W_F$ and $f \in \pp(n)$ is the partition of $n$
associated to the Jordan decomposition of $N,$ defines an injective
map on isomorphism classes of Weil-Deligne representations. Denote
its image by $\weil_{F}(n).$ In this way we identify the set
$\weil_F(n)$ with the set of isomorphism classes of $n$-dimensional
Weil-Deligne representations. Let $P_{F,n}: \weil_F(n) \to \pp(n)$
be the projection to the second coordinate. Let
$\weil_F=\cup_{n=1}^\infty \weil_F(n)$ be the set of isomorphism
classes of all finite dimensional Weil-Deligne representations and
let $P_F:\weil_F \to \pp$ be the map such that
${P_F}_{|\weil_F(n)}=P_{F,n}.$

\subsection{The local Langlands correspondence}

Let $\aaa_F(n)$ be the set of isomorphism classes of smooth and
irreducible representations of $GL_n(F)$ and set
$\aaa_F=\cup_{n=1}^\infty \aaa_F(n).$ For every $\pi \in \aaa_F$ we
denote by $\omega_\pi$ the central character of (any representation
in the isomorphism class of) $\pi.$ Fix a non trivial additive
character $\psi$ of $F.$ Due to Harris-Taylor \cite{MR1876802} and
independently to Henniart \cite{MR1738446} there exists a unique
sequence of bijections
\[
    \rec_{F,n}: \aaa_{F}(n) \to \weil_{F}(n)
\]
for all $n\ge 1$ satisfying the following properties:
\begin{eqnarray}
\label{eq: LCFT}& &\rec_{F}(\chi)=\chi \circ T_{F};\\& & \label{eq:
L-id}L(\pi_{1} \times \pi_{2},s)=L(\rec_{F}(\pi_{1})\otimes
\rec_{F}(\pi_{2}),s);\\& & \label{eq: epsilon-id}\epsilon(\pi_{1}
\times \pi_{2},s,\psi)=\epsilon(\rec_{F}(\pi_{1})\otimes
\rec_{F}(\pi_{2}),s,\psi);\\& & \label{eq: central char}\det \circ
\rec_{F}(\pi)=rec_{F}(\omega_\pi);\\& &\label{eq: dual-id}
\rec_{F}(\pi^\vee)=\rec_{F}(\pi)^\vee.
\end{eqnarray}
Here $\chi \in \aaa_F(1),\,\pi,\,\pi_1,\,\pi_2 \in \aaa_F,$
$\pi^\vee$ is the contragredient of $\pi,$ $\rec_{F}(\pi)^\vee$ is
the dual of $\rec_{F}(\pi)$ and $\rec_F:\aaa_F \to \weil_F$ is such
that ${\rec_F}_{|\aaa_F(n)}=\rec_{F,n}.$

\subsection{Expressing $\rec_F$ in terms of $\rec_F^\circ$}
Let $\aaa_F^\circ(n)\subseteq \aaa_F(n)$ be the subset of
isomorphism classes of supercuspidal representations and let
$\weil_F^\circ(n)\subseteq \weil_F(n)$ be the subset of isomorphism
classes $([\rho],f)$ such that $\rho$ is irreducible and
$f=\triv_n=\{n\}.$ The set $\weil_F^\circ(n)$ is identified with the
set of isomorphism classes of irreducible and $n$-dimensional
representations of $W_F.$ It follows from the work of Harris-Taylor
and independently of Henniart that there exists a unique sequence of
bijections
\[
    {\rec_{F,n}}_{|\aaa_F^\circ(n)}=\rec_{F,n}^\circ:\aaa_F(n)\to \weil_F^\circ(n)
\]
satisfying \eqref{eq: LCFT}, \eqref{eq: L-id}, \eqref{eq:
epsilon-id}, \eqref{eq: central char} and \eqref{eq: dual-id}. The
work of Zelevinsky \cite{MR584084} allows the extention of
$\rec_F^\circ$ to the map $\rec_F$ on $\aaa_F.$ This is also
explained in \cite{MR902829} and we now recall the construction of
$\rec_F$ in terms of $\rec^\circ_F.$

For $s \in \cc$ and every isomorphism class $\varpi=[\pi] \in
\aaa_F$ (resp. $\varrho=([\rho],f) \in \weil_F$) let $\varpi[s]=[\pi
\otimes \abs{\det}_F^s]$ (resp. $\varrho[s]=([\rho\otimes
\abs{\,\cdot}_{W_F}^s],f)$). A \emph{segment} in $\aaa_F^\circ$
(resp. $\weil_F^\circ$) is a set of the form
\[
    \Delta[\sigma,r]=\{\sigma[\frac{1-r}2],\sigma[\frac{3-r}2],\dots,\sigma[\frac{r-1}2]\}
\]
(resp.
\[
    \Delta[\rho,r]=\{\rho[\frac{1-r}2],\rho[\frac{3-r}2],\dots,\rho[\frac{r-1}2]\})
\]
for some $\sigma \in \aaa_F^\circ$ (resp. $\rho \in \weil_F^\circ$)
and $r\in \zz_{>0}.$ Let $\sss$ (resp. $\sss'$) denote the set of
all segments in $\aaa_F^\circ$ (resp. $\weil_F^\circ$) and let
$\oo=\ms_\fin(\sss)$ (resp. $\oo'=\ms_\fin(\sss')$). The bijection
$\rec^\circ_F:\aaa_F^\circ \to \weil_F^\circ$ defines a bijection
$\rec_F^\circ:\sss\to \sss'$ given by
$\rec_F^\circ(\Delta[\sigma,r])=\Delta[\rec_F^\circ(\sigma),r]$ and
a bijection $\rec_F^\circ:\oo \to \oo'$ given by
$\rec_F^\circ(a)(\rec_F^\circ(\Delta))=a(\Delta),\,\Delta\in \sss.$

In \cite[Section 6.5]{MR584084} Zelevinsky defines a bijection $a
\mapsto \rep{a}$ from $\oo$ to $\aaa_F.$ The Zelevinsky involution
is defined in \cite[Section 9.12]{MR584084} as an involution on the
Grothendick group associated with $\aaa_F.$ It is proved by Aubert
\cite{MR1285969}, \cite{MR1390967b} and independently by Procter
\cite{MR1625483} that the Zelevinsky involution restricts to a
bijection from $\aaa_F$ to itself that we denote by $\pi \mapsto
\pi^t.$ In \cite[Section 10.2]{MR584084} Zelevinsky  defines a
bijection $\tau:\oo' \to \weil_F$ as follows. For a segment
$\Delta[\rho,r]\in \sss'$ where $\rho \in \weil_F^\circ(t)$ let
\[
    \tau(\Delta[\rho,r])=(\oplus_{i=1}^r \,\rho, (r)_t)
\]
and for $a' \in \oo'$ set
\[
    \tau(a')=\oplus_{\Delta' \in \oo'} \tau(\Delta')
\]
where for $([\rho_1],f_1),\dots,([\rho_m],f_m) \in \weil_F$ the
direct sum is given by
\[
     ([\rho_1],f_1)\oplus\cdots \oplus ([\rho_m],f_m)=([
    \rho_1 \oplus\cdots \oplus \rho_m],f_1+\cdots+f_m).
\]
The reciprocity map $\rec_F$ is given by
\[
    \rec_F(\rep{a}^t)=\tau(\rec_F^\circ(a)),\ a \in \oo.
\]

\section{The $SL(2)$-type of a representation}\label{sec: SL(2)-type}

Denote by $\aaa^u_F(n)$ the subset of $\aaa_F(n)$ consisting of all
isomorphism classes of unitarizable representations and let
$\aaa_F^u=\cup_{n=1}^\infty \aaa_F(n).$
For $[\pi_1],\dots,[\pi_m] \in
\aaa_F$ we denote by $\pi_1 \times \cdots \times \pi_m$ the
representation parabolically induced from $\pi_1 \otimes \cdots
\otimes \pi_m$ and by $[\pi_1] \times \cdots \times [\pi_m]$ its
isomorphism class.

For $\sigma \in \aaa_F^\circ$ and integers $n,\,r >0$ let
\[
    \delta[\sigma,n]=\rep{\Delta[\sigma,n]}^t,
\]
\[
    a(\sigma,n,r)=\{\Delta[\sigma[\frac{1-r}{2}],n],\Delta[\sigma[\frac{3-r}{2}],n],
    \cdots,\Delta[\sigma,n](\frac{r-1}{2})\}\in \oo
\]
and
\[
    U(\delta[\sigma,n],r)=\rep{a(\sigma,n,r)}.
\]
Tadic's classification of the unitary dual of $GL_n(F)$ \cite{MR870688}
implies that if $\sigma \in \aaa_F^\circ
\cap \aaa_F^u$ then $U(\delta[\sigma,n],r) \in \aaa_F^u$ and that
for any $\pi\in \aaa_F^u$ there exist $\sigma_1,\dots,\sigma_m \in
\aaa_F^\circ$ and integers $n_1,\dots,n_m,\,r_1,\dots,r_m >0$ such
that
\begin{equation}\label{eq: general unitary}
    \pi=U(\delta[\sigma_1,n_1],r_1)\times \cdots \times
    U(\delta[\sigma_m,n_m],r_m).
\end{equation}
It further follows from \cite[Lemma 3.3]{MR1359141} that
\begin{equation}\label{eq: transpose of U}
    U(\delta[\sigma,n],r)^t=U(\delta[\sigma,r],n).
\end{equation}
The $SL(2)$ of a representation $\pi\in \aaa_F^u$ of the form \eqref{eq:
general unitary} is defined in \cite[Definition 1]{MR2133760}
to be the partition
\begin{equation}\label{eq: explicit SL(2)-type}
%    \vv(\pi)=
\{(r_1)_{n_1},\dots,(r_m)_{n_m}\}.
\end{equation}
%We now observe that there is a natural way to extend the definition of the $SL(2)$-type to $\aaa_F.$
\begin{theorem}\label{thm: SL(2)-type}
The $SL(2)$-type of a representation $\pi\in \aaa_F^u$ equals
$P_F(\rec_F(\pi^t)).$
\end{theorem}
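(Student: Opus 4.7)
The plan is to reduce the claimed identity to the case of a single Tadic factor via additivity of both sides, and then to perform a direct calculation using the construction of $\rec_F$ from $\rec_F^\circ$ given at the end of Section~\ref{sec: bc}.

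First, I would verify that both sides of the identity are additive over the Tadic decomposition $\pi = \pi_1 \times \cdots \times \pi_m$, with $\pi_i = U(\delta[\sigma_i, n_i], r_i)$. The $SL(2)$-type in \eqref{eq: explicit SL(2)-type} is, by definition, the multiset union of the contributions of the factors. On the other side, since unitary parabolic induction of irreducibles is irreducible, the Zelevinsky involution respects the decomposition: $\pi^t = \pi_1^t \times \cdots \times \pi_m^t$. The multiplicativity properties \eqref{eq: L-id}--\eqref{eq: epsilon-id} of $\rec_F$ then force $\rec_F(\pi^t) = \bigoplus_i \rec_F(\pi_i^t)$, and $P_F$ of a direct sum of Weil--Deligne representations is the multiset union of the individual Jordan partitions. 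Hence the claim reduces to the single factor $\pi_0 = U(\delta[\sigma, n], r)$.

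For a single factor, I would use the identity $\pi_0 = \rep{a(\sigma, n, r)}$ together with $\rec_F(\rep{a}^t) = \tau(\rec_F^\circ(a))$ to obtain
\[
    \rec_F(\pi_0^t) = \tau(\rec_F^\circ(a(\sigma, n, r))).
\]
The multisegment $a(\sigma, n, r)$ consists of $r$ shifted copies of a segment of length $n$ in $\sss$. Applying $\rec_F^\circ$ transports it to $r$ segments in $\sss'$ of length $n$, each based on an irreducible Weil representation of dimension $\dim \sigma$. The definition of $\tau$ on each such segment — namely $\tau(\Delta[\rho', n]) = (\oplus_{i=1}^{n}\rho', (n)_{\dim\rho'})$ — contributes the Jordan partition $(n)_{\dim \sigma}$, and the direct-sum rule for $\tau$ on multisegments translates to summing these partitions as multisets. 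Comparing the resulting partition with the explicit form in \eqref{eq: explicit SL(2)-type} completes the single-factor case.

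The main obstacles are twofold. First, the multiplicativity check in the reduction step requires the unitarity of $\pi$ to guarantee that all the parabolic inductions appearing in the Tadic decomposition are irreducible, so that both $\rec_F$ and the Zelevinsky involution behave multiplicatively. Second, the final combinatorial comparison, where the output of $\tau$ must be identified with the multiset $\{(r)_n\}$ of \eqref{eq: explicit SL(2)-type}, requires careful tracking of the role of the cuspidal dimension $\dim \sigma$. Everything else is a routine unwinding of the definitions in Sections~\ref{sec: bc} and~\ref{sec: SL(2)-type}.
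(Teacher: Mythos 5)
Your overall strategy---unwinding $\rec_F$ through its construction as $\tau\circ\rec_F^\circ$ on multisegments, reducing to a single Tadic factor, and computing $\tau$ segment by segment---is the same as the paper's, which packages the computation as Lemma \ref{lemma: rec of unitary}. But two steps are not sound as written. The lesser one: the additivity $\rec_F(\pi_1^t\times\cdots\times\pi_m^t)=\oplus_i\rec_F(\pi_i^t)$ does not follow from the $L$- and $\epsilon$-factor identities \eqref{eq: L-id}--\eqref{eq: epsilon-id}; those are constraints on pairs and say nothing directly about how $\rec_F$ interacts with an irreducible parabolic induction. The justification available inside this framework (and the one the paper uses) is to write each factor as $\rep{a_i}^t$, deduce irreducibility of $\rep{a_1}\times\cdots\times\rep{a_m}$ from the irreducibility of $\pi$ and the fact that $t$ is an involution on $\aaa_F$, apply Zelevinsky's \cite[Proposition 8.4]{MR584084} to get $\rep{a_1}\times\cdots\times\rep{a_m}=\rep{a_1+\cdots+a_m}$, and then use that additivity in $a$ is built into the defining formula $\rec_F(\rep{a}^t)=\tau(\rec_F^\circ(a))$.

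The more serious problem is that your single-factor computation does not land on the target, and the discrepancy is not the one you flag. For $\pi_0=U(\delta[\sigma,n],r)$ you produce $r$ copies of the Jordan partition $(n)_{\dim\rho}$, i.e.\ a partition all of whose parts have size $n$; the partition \eqref{eq: explicit SL(2)-type} assigns to $\pi_0$ has all parts of size $r$. Since $\dim\sigma$ only enters the multiplicities, ``careful tracking of the cuspidal dimension'' cannot reconcile a partition with parts of size $n$ with one with parts of size $r$. The missing ingredient is the duality \eqref{eq: transpose of U}: the paper first establishes that $\pi^t$ is again of Tadic form with every pair $(n_i,r_i)$ replaced by $(r_i,n_i)$ (this is \eqref{eq: transpose of unitary}), and only then applies the formula for $\rec_F$, so that the segments entering $\tau$ have length $r_i$ and the Jordan blocks come out with size $r_i$. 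Your proposal skips this exchange, feeds $\tau$ segments of length $n$, and defers the resulting mismatch to a ``final combinatorial comparison'' that, as described, cannot close. (The paper's own bookkeeping between $a(\sigma,n,r)$ and $a(\sigma,r,n)$ is delicate at exactly this spot, so this is an easy trap; but a correct writeup must make the $n\leftrightarrow r$ exchange explicit rather than hope it falls out of the definitions.)
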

\begin{remark}\label{rmk: SL(2)-type}
Theorem \ref{thm: SL(2)-type} allows us to define the $SL(2)$-type
of any $\pi \in \aaa_F$ by the formula $P_F(\rec_F(\pi^t)).$ Note
further that given a reciprocity map (local Langlands conjecture),
this provides a recipe to define the $SL(2)$-type of an
irreducible representation for any reductive group!
\end{remark}
\begin{proof}
Based on Tadic's classification of the unitary dual of
$GL_n(F),$ the proof of Theorem \ref{thm: SL(2)-type} is merely a
matter of following the definitions. For convenience, we provide the
proof. The key is in the following simple observations.
\begin{lemma}\label{lemma: rec of unitary}
Let $\pi\in \aaa_F^u$ be of the form \eqref{eq: general unitary}.
Then
\begin{equation}\label{eq: rec of unitary}
    \rec_F(\pi)=\oplus_{i=1}^m \oplus_{j=1}^{r_i}
    \tau(\Delta[\sigma_i[\frac{r_i+1}2-j],n_i])
\end{equation}
and
\begin{equation}\label{eq: transpose of unitary}
    \pi^t=U(\delta[\sigma_1,r_1],n_1)\times \cdots \times
    U(\delta[\sigma_m,r_m],n_m)\in \aaa_F^u.
\end{equation}
\end{lemma}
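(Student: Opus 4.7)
The plan is to handle \eqref{eq: transpose of unitary} and \eqref{eq: rec of unitary} separately, each by unwinding the relevant definitions.

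For the transpose identity \eqref{eq: transpose of unitary}, I would begin by noting that the Zelevinsky involution on $\aaa_F$ is additive on the Grothendieck group and therefore satisfies $(\pi_1 \times \cdots \times \pi_m)^t = \pi_1^t \times \cdots \times \pi_m^t$ whenever both sides are irreducible. Tadic's classification guarantees that $\pi$, and hence both sides of this identity, are irreducible. Applying this fact to $\pi = U(\delta[\sigma_1,n_1],r_1) \times \cdots \times U(\delta[\sigma_m,n_m],r_m)$ together with the single-factor identity \eqref{eq: transpose of U} immediately produces the claimed product formula for $\pi^t$; membership in $\aaa_F^u$ then follows because $\pi^t$ is itself of the Tadic shape to which the classification applies.

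For \eqref{eq: rec of unitary}, the strategy is to exploit the additivity of $\rec_F$ under parabolic induction: for an irreducible product, $\rec_F(\pi_1 \times \pi_2) = \rec_F(\pi_1) \oplus \rec_F(\pi_2)$, a consequence of the multiplicativity properties \eqref{eq: L-id}, \eqref{eq: epsilon-id} that characterize $\rec_F$. This reduces the problem to a single factor, so it suffices to establish
\[
\rec_F(U(\delta[\sigma,n],r)) = \oplus_{j=1}^r \tau(\Delta[\sigma[\tfrac{r+1}{2}-j],n]).
\]
To verify this single-factor identity, I would combine $U(\delta[\sigma,n],r) = \rep{a(\sigma,n,r)}$ with the defining property $\rec_F(\rep{a}^t) = \tau(\rec_F^\circ(a))$ and the equivalence (extracted from \eqref{eq: transpose of U}) that $\rep{a(\sigma,n,r)}^t = \rep{a(\sigma,r,n)}$. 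Substituting the appropriate multisegment into the defining formula then expresses $\rec_F$ of the representation of interest as $\tau$ applied segment-by-segment to $a(\sigma,n,r)$, whose $r$ constituents are precisely the $\Delta[\sigma[\tfrac{r+1}{2}-j],n]$ for $j=1,\dots,r$.

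The main obstacle is the bookkeeping around the Zelevinsky involution: one must take care that, after unwinding $\rec_F(\rep{\cdot}^t) = \tau(\rec_F^\circ(\cdot))$, the multisegment appearing inside $\tau$ is $a(\sigma,n,r)$ (yielding the claimed $r$ length-$n$ segments per factor) rather than its transpose $a(\sigma,r,n)$. The passage from the Tadic product to a single multisegment via $\rep{a_1}\times\cdots\times\rep{a_m} = \rep{a_1+\cdots+a_m}$ for irreducible products is a convenient intermediate step that lets the defining formula be applied in one shot. Once these conventions are pinned down, both parts of the lemma follow by direct computation.
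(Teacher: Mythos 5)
Your overall strategy coincides with the paper's: both identities are obtained from \eqref{eq: transpose of U}, the multiplicativity of the Zelevinsky involution on irreducible products, Zelevinsky's product formula $\rep{a_1}\times\cdots\times\rep{a_m}=\rep{a_1+\cdots+a_m}$, and the defining relation $\rec_F(\rep{a}^t)=\tau(\rec_F^\circ(a))$. Your treatment of \eqref{eq: transpose of unitary} is correct and is exactly the paper's. A minor quibble: the additivity of $\rec_F$ on irreducible products is not a formal consequence of \eqref{eq: L-id} and \eqref{eq: epsilon-id}; in this paper it is precisely the combination of Zelevinsky's Proposition 8.4 with the additivity of $\tau$ and the multiplicativity of $t$ --- the ``intermediate step'' you mention --- so this is a misattribution rather than a gap.

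The genuine problem sits at the step you yourself single out as the main obstacle, where you assert the outcome of the substitution instead of computing it. Unwinding the stated definitions: $U(\delta[\sigma,n],r)=\rep{a(\sigma,n,r)}$, which by \eqref{eq: transpose of U} equals $\rep{a(\sigma,r,n)}^t$, so the defining relation yields
\[
\rec_F\bigl(U(\delta[\sigma,n],r)\bigr)=\tau\bigl(\rec_F^\circ(a(\sigma,r,n))\bigr),
\]
and $a(\sigma,r,n)$ consists of $n$ segments each of length $r$. Thus the multisegment landing inside $\tau$ is $a(\sigma,r,n)$ --- exactly the one you claim it is not --- and the single-factor answer is a sum of $n$ terms $\tau(\Delta[\sigma[\frac{n+1}{2}-j],r])$, not the $r$ terms $\tau(\Delta[\sigma[\frac{r+1}{2}-j],n])$ required by \eqref{eq: rec of unitary}. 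This is also what the paper's own proof produces, since it takes $a_i=a(\sigma_i,r_i,n_i)$, which has $n_i$ segments of length $r_i$. A sanity check makes the discrepancy vivid: with the literal definitions $U(\delta[\triv,n],1)=\rep{\Delta[\triv,n]}$ is the trivial character of $GL_n(F)$, whose parameter has partition $(1)_n$, whereas your claimed single-factor identity (and \eqref{eq: rec of unitary}) would assign it $\tau(\Delta[\triv,n])$, whose partition is $\{n\}$ --- the Steinberg parameter. The displayed formula \eqref{eq: rec of unitary} is the one needed for \eqref{eq: explicit SL(2)-type} and Theorem \ref{thm: SL(2)-type} to come out right, but it only follows if one of the paper's conventions carries an $n\leftrightarrow r$ swap (most plausibly $U(\delta[\sigma,n],r)$ should be the Langlands quotient $\rep{a(\sigma,n,r)}^t$, i.e.\ Tadi\'c's $u(\delta(\sigma,n),r)$, rather than $\rep{a(\sigma,n,r)}$). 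So you cannot resolve the bookkeeping by fiat in favour of $a(\sigma,n,r)$: either carry out the substitution and accept the $n\leftrightarrow r$-swapped formula, or first identify and correct the convention and only then derive \eqref{eq: rec of unitary}. As written, this step of your argument does not go through.
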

\begin{proof}
Let $a_i=a(\sigma_i,r_i,n_i).$ It follows from \eqref{eq: transpose
of U} that
\[
    \pi=\rep{a_1}^t \times \cdots \times \rep{a_m}^t=
    (\rep{a_1} \times \cdots \times \rep{a_m})^t
\]
and since $t$ is an involution on $\aaa_F$ that $\rep{a_1} \times
\cdots \times \rep{a_m} \in \aaa_F.$ Thus, it follows from
\cite[Proposition 8.4]{MR584084} that $\rep{a_1} \times \cdots
\times \rep{a_m}=\rep{a_1+\cdots+a_m}.$ In other words
$\pi=\rep{a_1+\cdots+a_m}^t$ and therefore by definition
\[
    \rec_F(\pi)=\tau(\rec_F^\circ(a_1+\cdots+a_m))=\oplus_{i=1}^m
    \tau(\rec_F^\circ(a_i)).
\]
The identity \eqref{eq: rec of unitary} now follows from the
definition of $\tau(\rec_F^\circ(a_i)).$ Note that \eqref{eq:
transpose of U} implies that
\[
    \pi^t=U(\delta[\sigma_1,r_1],n_1)\times \cdots \times
    U(\delta[\sigma_m,r_m],n_m)
\]
and the classification of Tadic therefore implies that $\pi^t \in
\aaa_F^u.$ Thus we get \eqref{eq: transpose of unitary}.
\end{proof}
Applying \eqref{eq: rec of unitary} to $\pi^t$ and comparing with
\eqref{eq: explicit SL(2)-type} Theorem \ref{thm: SL(2)-type}
follows from the definitions.
\end{proof}

From now on for every $\pi \in \aaa_F$ we denote by
\begin{equation}\label{eq: def of vv}
    \vv(\pi)=P_F(\rec_F(\pi^t))
\end{equation}
the $SL(2)$-type of $\pi.$

\section{Klyachko models}\label{sec: Klyachko}
For positive integers $r$ and $k$ denote by $U_r$ the subgroup of
upper triangular unipotent matrices in $GL_r(F)$ and by $Sp_{2k}(F)$
the symplectic group in $GL_{2k}(F).$ Fix a decomposition $n=r+2k.$
Let
\[
    H_{r,2k} =\{
\left(
\begin{array}{cc}
  u  & X   \\
  0  &  h  \\
\end{array}
\right): u \in U_r,\, X \in M_{r \times 2k}(F),\,h \in Sp_{2k}(F)\}.
\]
Let $\psi$ be a non trivial character of $F.$ For $u=(u_{i,j}) \in
U_r$ let
%\begin{equation}\label{eq: gen char}
\[
    \psi_r(u)=\psi(u_{1,2}+\cdots+u_{r-1, r})
\]
%\end{equation}
and let $\psi_{r,2k}$ be the character of $H_{r,2k}$ defined by
%\begin{equation}\label{eq: char on H}
\[
    \psi_{r,2k}
    \left(
        \begin{array}{cc}
        u  & X   \\
        0  &  h  \\
        \end{array}
    \right)=\psi_r(u).
\]
%\end{equation}
We refer to the space
\[
\mix_{r,2k}=\mbox{Ind}_{H_{r,2k}}^{GL_{n}(F)}(\psi_{r,2k})
\]
as a \emph{Klyachko model} for $GL_n(F).$ Here $\Ind$ denotes the
functor of non-compact smooth induction.

In \cite[Corollary 1]{OS3}
we showed that for any $\pi \in \aaa_F^u(n)$ there exists a unique
decomposition
\[
n=r(\pi)+2k(\pi)
\]
such that
\[
\Hom_{GL_n(F)}(\pi,\mix_{r(\pi),2k(\pi)})\ne 0
\]
and that in fact
$\dim_\cc(\Hom_{GL_n(F)}(\pi,\mix_{r(\pi),2k(\pi)}))=1.$
\begin{definition}
For $\pi \in \aaa_F^u,$ the \emph{Klyachko type} of $\pi$ is the
ordered pair $(r(\pi),2k(\pi)).$
\end{definition}
In fact, for $\aaa_F^u$ \cite[Theorem 8]{os} provides a receipt in order to
read the Klyachko type off Tadic's classification.
Based on \eqref{eq: explicit SL(2)-type}, our results can be reinterpreted
by the formula
\begin{equation}\label{eq: kly in terms of SL(2)-type}
    r(\pi)=\odd(\vv(\pi)),\ \pi \in \aaa_F^u.
\end{equation}

\section{Base change-The main results}\label{sec: statements}

Let $E$ be a finite extension of $F.$ Denote by
$\res_{E/F,n}:\weil_F(n)\to \weil_E(n)$ the map defined by
$\res_{E/F,n}(([\rho],f))=([\rho_{|W_E}],f).$ For $n\ge 1$ the
\emph{base change} $\bc_{E/F}(\pi)\in \aaa_E(n)$ of $\pi \in
\aaa_F(n)$ is defined by
\[
    \rec_E(\bc_{E/F}(\pi))=\res_{E/F}(\rec_F(\pi)).
\]
\begin{theorem}\label{thm: main}
Let $E/F$ be a finite extension of $p$-adic fields and let $\pi$ be
a smooth, irreducible and unitarizable representation of $GL_n(F).$
Then $\bc_{E/F}(\pi)$ is a smooth, irreducible and unitarizable
representation of $GL_n(E)$ and
\[
    \vv(\pi)=\vv(\bc_{E/F}(\pi)),
\]
i.e. $\pi$ and $\bc_{E/F}(\pi)$ have the same $SL(2)$-type.
\end{theorem}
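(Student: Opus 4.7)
The plan rests on Theorem \ref{thm: SL(2)-type} combined with the defining compatibility $\rec_E(\bc_{E/F}(\pi)) = \res_{E/F}(\rec_F(\pi))$ of base change. Since $\res_{E/F}$ leaves the partition coordinate untouched, $P_E \circ \res_{E/F} = P_F$ on $\weil_F$, and the theorem would follow at once from the commutation $\bc_{E/F}(\pi^t) = \bc_{E/F}(\pi)^t$ between base change and the Zelevinsky involution, because then
\[
\vv(\bc_{E/F}(\pi)) = P_E(\rec_E(\bc_{E/F}(\pi)^t)) = P_E(\res_{E/F}(\rec_F(\pi^t))) = P_F(\rec_F(\pi^t)) = \vv(\pi).
\]

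To verify this commutation and simultaneously obtain $\bc_{E/F}(\pi) \in \aaa_E^u$, I would decompose $\pi$ according to Tadic as $U(\delta[\sigma_1, n_1], r_1) \times \cdots \times U(\delta[\sigma_m, n_m], r_m)$ with $\rho_i = \rec_F^\circ(\sigma_i) \in \weil_F^\circ(t_i)$, and decompose each restriction $\rho_i|_{W_E} = \bigoplus_j m_{i,j}\, \rho_{i,j}$ into distinct irreducibles of $W_E$, with associated supercuspidals $\sigma_{i,j} \in \aaa_E^\circ(t_{i,j})$. The crux is the natural compatibility
\[
\res_{E/F}(\tau(\Delta[\rho_i[s], n_i])) = \bigoplus_j m_{i,j}\, \tau(\Delta[\rho_{i,j}[s], n_i]),
\]
which combined with Lemma \ref{lemma: rec of unitary} identifies $\rec_E(\bc_{E/F}(\pi))$ with $\rec_E$ applied to the iterated parabolic induction
\[
\pi' = \times_{i,j}\, U(\delta[\sigma_{i,j}, n_i], r_i)^{m_{i,j}}.
\]
By Tadic's classification $\pi'$ is irreducible and unitary, and injectivity of $\rec_E$ forces $\bc_{E/F}(\pi) = \pi' \in \aaa_E^u$. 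Applying the same argument to $\pi^t$, whose Tadic factors are $U(\delta[\sigma_i, r_i], n_i)$ by \eqref{eq: transpose of U}, yields $\bc_{E/F}(\pi^t) = \times_{i,j}\, U(\delta[\sigma_{i,j}, r_i], n_i)^{m_{i,j}}$, which by \eqref{eq: transpose of U} again equals $(\pi')^t = \bc_{E/F}(\pi)^t$.

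The principal obstacle is justifying the displayed compatibility between $\tau$ and $\res_{E/F}$: one must verify that the Jordan-type coordinate of a Steinberg-type Weil-Deligne representation distributes correctly when the underlying $W_F$-representation is decomposed according to its restriction to $W_E$, which ultimately reflects the dimension identity $\sum_j m_{i,j} t_{i,j} = t_i$. Once this compatibility is settled, the rest is routine bookkeeping within Tadic's classification.
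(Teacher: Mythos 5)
Your proposal is correct and follows essentially the same route as the paper: reduce everything to the commutation $\bc_{E/F}(\pi^t)=\bc_{E/F}(\pi)^t$ via the observation $P_E\circ\res_{E/F}=P_F$, and establish that commutation by restricting each $\rho_i=\rec_F^\circ(\sigma_i)$ to $W_E$ and matching $\rec_E$ of an explicit Tadic product against $\res_{E/F}(\rec_F(\pi))$ using the formula of Lemma \ref{lemma: rec of unitary}. The only point you leave implicit --- that the supercuspidals $\sigma_{i,j}$ attached to the irreducible constituents of $\rho_i|_{W_E}$ are themselves unitarizable, which is what licenses invoking Tadic's classification to conclude that $\pi'$ is irreducible and unitary --- is precisely the content of the paper's Lemma \ref{lemma: bc for cuspidal}, proved by noting via \eqref{eq: central char} that $\rho_i$ carries a unitary structure which passes to the constituents of its restriction.
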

As a consequence we have the following.
\begin{corollary}\label{cor: main}
Under the assumptions of Theorem \ref{thm: main} we have
\[
    r(\pi)=r(\bc_{E/F}(\pi)),
\]
i.e. $\pi$ and $\bc_{E/F}(\pi)$ have the same Klyachko type.
\end{corollary}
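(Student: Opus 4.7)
The plan is to derive the corollary as a one-line consequence of Theorem \ref{thm: main} combined with the reinterpretation \eqref{eq: kly in terms of SL(2)-type} of the Klyachko type in terms of the $SL(2)$-type. First, I would invoke Theorem \ref{thm: main} to conclude that $\bc_{E/F}(\pi)$ is smooth, irreducible, and unitarizable, so that the formula \eqref{eq: kly in terms of SL(2)-type} applies to it as well as to $\pi$.

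Second, I would apply \eqref{eq: kly in terms of SL(2)-type} to both representations and use the equality $\vv(\pi)=\vv(\bc_{E/F}(\pi))$ supplied by Theorem \ref{thm: main}:
\[
    r(\pi)=\odd(\vv(\pi))=\odd(\vv(\bc_{E/F}(\pi)))=r(\bc_{E/F}(\pi)).
\]
Since base change preserves the ambient dimension $n$ (the restriction $\res_{E/F,n}$ is $n$-dimensional), the symplectic part is then automatically matched: $2k(\pi)=n-r(\pi)=n-r(\bc_{E/F}(\pi))=2k(\bc_{E/F}(\pi))$, which gives the equality of the Klyachko types as ordered pairs.

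There is essentially no obstacle here, since all the substance has been absorbed into Theorem \ref{thm: main} and into the previously established formula \eqref{eq: kly in terms of SL(2)-type}; the corollary is a purely formal consequence of the fact that the Klyachko type of a unitarizable representation depends only on its $SL(2)$-type, specifically only on the number of odd parts. The only point worth highlighting in the write-up is the unitarizability claim for $\bc_{E/F}(\pi)$, so that the previously proved results from \cite{OS3} and \cite{os} quoted in Section \ref{sec: Klyachko} are indeed applicable on the $E$-side.
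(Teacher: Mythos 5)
Your proposal is correct and follows exactly the paper's route: the authors likewise deduce the corollary directly from Theorem \ref{thm: main} together with the formula $r(\pi)=\odd(\vv(\pi))$ of \eqref{eq: kly in terms of SL(2)-type}, the unitarizability of $\bc_{E/F}(\pi)$ ensuring that this formula applies on the $E$-side. Your added remark that $2k$ is then determined by $2k=n-r$ is a harmless elaboration of the same one-line argument.
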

Corollary \ref{cor: main} is straightforward from Theorem \ref{thm:
main} and \eqref{eq: kly in terms of SL(2)-type}.
\section{Proof of the main result}\label{sec: proofs}
\begin{lemma}\label{lemma: bc for cuspidal}
Let $E/F$ be a finite extension.
For $\sigma \in \aaa_F^\circ\cap \aaa_F^u$ there exist $\sigma_1,\dots,\sigma_m \in
\aaa_E^\circ\cap \aaa_E^u$ such that
\[
    \bc_{E/F}(\sigma)=\sigma_1\times \cdots \times \sigma_m.
\]
\end{lemma}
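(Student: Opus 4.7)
The plan is to chase through the defining relation $\rec_E(\bc_{E/F}(\sigma))=\res_{E/F}(\rec_F(\sigma))$ and identify the right-hand side as the reciprocity image of a product of unitarizable supercuspidals over $E$. Since $\sigma$ is supercuspidal, $\rec_F^\circ(\sigma)$ is an irreducible $n$-dimensional Frobenius-semisimple representation $\rho$ of $W_F$ with vanishing monodromy. Because $W_E$ is an open subgroup of finite index in $W_F$ and Frobenius-semisimplicity is preserved under restriction, $\rho|_{W_E}$ is semisimple and decomposes as $\rho_1\oplus\cdots\oplus\rho_m$ with each $\rho_i$ an irreducible $W_E$-representation of some dimension $n_i$, $\sum_i n_i = n$. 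Each $\rho_i$ is the image under $\rec_E^\circ$ of a supercuspidal $\sigma_i\in\aaa_E^\circ(n_i)$; these are the candidate factors.

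For the unitarity step I would invoke the standard fact (a consequence of \eqref{eq: central char} together with the temperedness of unitary supercuspidals) that a supercuspidal is unitarizable if and only if its Langlands parameter has bounded image. The image of $\rho|_{W_E}$ sits inside that of $\rho$, hence is bounded, and bounded image is inherited by direct summands, so every $\rho_i$ has bounded image and each $\sigma_i$ lies in $\aaa_E^\circ\cap\aaa_E^u$. The unitarity of the $\sigma_i$ also rules out any linked-segment relation $\sigma_i\cong\sigma_j\otimes\abs{\det}_E^{\pm1}$, since the right-hand side would have non-unitary central character. By the Bernstein--Zelevinsky irreducibility criterion for induction of cuspidals, $\pi:=\sigma_1\times\cdots\times\sigma_m$ is therefore irreducible and unitarizable.

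To conclude, it suffices to check $\rec_E(\pi)=\res_{E/F}(\rec_F(\sigma))$, since $\rec_E$ is injective. Setting $a_0=\{\Delta[\sigma_1,1],\dots,\Delta[\sigma_m,1]\}\in\oo$, irreducibility of $\pi$ combined with \cite[Proposition 8.4]{MR584084} gives $\pi=\rep{a_0}$, and since all building blocks are supercuspidal the Zelevinsky involution acts trivially, so $\pi=\rep{a_0}^t$. The defining formula $\rec_E(\rep{a_0}^t)=\tau(\rec_E^\circ(a_0))$, unpacked on length-one segments, then yields $\rec_E(\pi)=\bigoplus_{i=1}^m(\rho_i,(1)_{n_i})=([\rho|_{W_E}],(1)_n)=\res_{E/F}(\rec_F(\sigma))$, as required. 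The step I expect to demand the greatest care is the irreducibility of $\pi$: a segment collision among the $\sigma_i$ would break the identification $\pi=\rep{a_0}$, and it is precisely the unitarity hypothesis on $\sigma$ that propagates through the Galois side to forbid such collisions.
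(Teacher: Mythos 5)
Your proof is correct and follows essentially the same route as the paper: restrict $\rho=\rec_F^\circ(\sigma)$ to $W_E$, decompose it into irreducible summands, and use boundedness (equivalently, the existence of a unitary structure, which the paper deduces from the unitary central character via \eqref{eq: central char}) to see that each summand corresponds to a unitarizable supercuspidal over $E$. The only real difference is cosmetic: where you certify the irreducibility of $\sigma_1\times\cdots\times\sigma_m$ by hand through the unlinked-segments criterion, the paper gets both irreducibility and the identification with $\bc_{E/F}(\sigma)$ in one stroke by applying its formula \eqref{eq: rec of unitary} (hence Tadic's classification).
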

\begin{proof}
Recall that a representation in $ \aaa_F^\circ$ is unitarizable if and only if
its central character is unitary. Let $\rho$ be the irreducible representation of $W_F$ such that $\rec_F(\sigma)=([\rho],\triv_n).$ It follows from
\eqref{eq: central char} that $\rho$ has a unitary central character and therefore it has a unitary structure. Thus, the restriction $\rho_{|W_E}$ to $W_E$ also has a unitary structure and therefore each of its irreducible componencts has a unitary central character. The lemma follows by applying
\eqref{eq: rec of unitary} to $\res_{E/F}(\rec_F(\sigma))$.
\end{proof}
\begin{proposition}\label{prop: bc of Zel inv}
Let $E/F$ be a finite extension and let $\pi\in \aaa_F^u$ then
$\bc(\pi)\in \aaa_E^u$ and
\begin{equation}\label{eq: bc and transpose commute}
    \bc_{E/F}(\pi^t)=\bc_{E/F}(\pi)^t.
\end{equation}
\end{proposition}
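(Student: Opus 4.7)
The plan is to exploit the explicit description of $\rec_F$ on unitary representations given by Lemma \ref{lemma: rec of unitary} together with the decomposition of cuspidal base change in Lemma \ref{lemma: bc for cuspidal}. Write $\pi$ in Tadic form as in \eqref{eq: general unitary}, with data $\{(\sigma_i,n_i,r_i)\}_{i=1}^m$ and $\sigma_i \in \aaa_F^\circ \cap \aaa_F^u$. For each $i$, Lemma \ref{lemma: bc for cuspidal} furnishes a decomposition
\[
\bc_{E/F}(\sigma_i) = \sigma_{i,1}\times \cdots \times \sigma_{i,m_i}
\]
with $\sigma_{i,k}\in \aaa_E^\circ \cap \aaa_E^u$; equivalently, writing $\rec_F^\circ(\sigma_i)=[\rho_i]$ and decomposing the restriction $(\rho_i)_{|W_E} = \oplus_k \rho_{i,k}'$ into $W_E$-irreducibles, one has $\rec_E^\circ(\sigma_{i,k})=[\rho_{i,k}']$.

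The key observation is that $\res_{E/F}$ commutes with the $\tau$-construction on segments: since $\abs{\,\cdot}_{W_F}$ restricts to $\abs{\,\cdot}_{W_E}$ on $W_E$, for any $\rho \in \weil_F^\circ$ with $\rho_{|W_E} = \oplus_k \rho_k'$ one has
\[
\res_{E/F}(\tau(\Delta[\rho,r])) = \oplus_k \tau(\Delta[\rho_k',r]),
\]
because the nilpotent operator $N$ is unaffected by restriction and the underlying $W_F$-representation decomposes compatibly with twists. Combining this with Lemma \ref{lemma: rec of unitary} applied to $\pi$ yields
\[
\res_{E/F}(\rec_F(\pi)) = \oplus_{i,k}\oplus_{j=1}^{r_i} \tau(\Delta[\rho_{i,k}'[\smfrac{r_i+1}{2}-j],n_i]).
\]
Setting $\pi' = \prod_{i,k} U(\delta[\sigma_{i,k},n_i],r_i)$, Tadic's classification guarantees $\pi'\in \aaa_E^u$, and a second application of Lemma \ref{lemma: rec of unitary} to $\pi'$ shows that $\rec_E(\pi')$ equals the expression above. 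By the defining property of base change we conclude $\bc_{E/F}(\pi)=\pi'\in \aaa_E^u$.

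For the commutation \eqref{eq: bc and transpose commute}, apply \eqref{eq: transpose of unitary} to $\pi$ to obtain $\pi^t = \prod_i U(\delta[\sigma_i,r_i],n_i)$, which is again of Tadic form with the roles of $n_i$ and $r_i$ swapped. Running the preceding argument on $\pi^t$ gives
\[
\bc_{E/F}(\pi^t) = \prod_{i,k} U(\delta[\sigma_{i,k},r_i],n_i).
\]
On the other hand, applying \eqref{eq: transpose of unitary} to $\pi'=\bc_{E/F}(\pi)$ yields the very same expression for $\bc_{E/F}(\pi)^t$, so the two agree.

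The main technical point is the restriction-commutes-with-$\tau$ formula; once one checks that $\abs{\,\cdot}_{W_F}$ and $\abs{\,\cdot}_{W_E}$ coincide on $W_E$ (a direct consequence of the normalization by geometric Frobenius together with $q_E = q_F^{f(E/F)}$), the identity follows formally from the definition of $\tau$, since restriction affects only the semisimple part of the Weil-Deligne representation and commutes with direct sums. Everything else is careful bookkeeping of the Tadic parameters through the reciprocity map.
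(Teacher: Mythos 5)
Your proposal is correct and follows essentially the same route as the paper: decompose the cuspidal base changes via Lemma \ref{lemma: bc for cuspidal}, compute $\res_{E/F}(\rec_F(\pi))$ using \eqref{eq: rec of unitary}, match it against $\rec_E$ of the candidate product of $U(\delta[\sigma_{i,k},n_i],r_i)$'s (unitarizable by Tadic), and then deduce \eqref{eq: bc and transpose commute} by running the argument on $\pi^t$ via \eqref{eq: transpose of unitary}. The only difference is that you spell out explicitly the compatibility of $\res_{E/F}$ with $\tau$ and of $\abs{\,\cdot}_{W_F}$ with $\abs{\,\cdot}_{W_E}$, a step the paper leaves implicit; that is a correct and harmless addition.
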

\begin{proof}
Let $\pi \in \aaa_F^u$ be of the form \eqref{eq: general unitary}.
By Lemma \ref{lemma: bc for cuspidal} there exist $\sigma_{i,k}\in
\aaa_E^\circ,\,i=1,\dots,m,\,k=1,\dots,t_i$ such that
\[
\bc_{E/F}(\sigma_i)=\sigma_{i,1}\times \cdots \times \sigma_{i,t_i}.
\]
Let $\rho_i=\rec_F^\circ(\sigma_i)$ and
$\rho_{i,k}=\rec_E^\circ(\sigma_{i,k}).$ Thus,
\[
\res_{E/F}(\rho_i)=\oplus_{k=1}^{t_i}\rho_{i,k}.
\]
It follows from \eqref{eq: rec of unitary} that
\begin{equation}\label{eq: res of rec}
    \res_{E/F}(\rec_F(\pi))=\oplus_{i=1}^m
    \oplus_{j=1}^{r_i}\oplus_{k=1}^{t_i}
    \tau(\Delta[\sigma_{i,k}[\frac{r_i+1}2-j],n_i]).
\end{equation}
On the other hand, let
\[
    \Pi=\times_{i=1}^m \times_{k=1}^{t_i}U(\delta[\sigma_{i,k},n_i],r_i)
\]
Since $\pi\in \aaa_F^u,$ the classification of Tadic implies that
$\Pi \in \aaa_E^u$ and by \eqref{eq: rec of unitary} applied to $E$
instead of $F$ we have
\begin{equation}\label{eq: rec_E id}
    \rec_E(\Pi)=\oplus_{i=1}^m
    \oplus_{j=1}^{r_i}\oplus_{k=1}^{t_i}
    \tau(\Delta[\sigma_{i,k}[\frac{r_i+1}2-j],n_i]).
\end{equation}
Comparing \eqref{eq: res of rec} with \eqref{eq: rec_E id} we obtain
that $\Pi=\bc_{E/F}(\pi)$ and in particular that $\bc_{E/F}(\pi)
\in\aaa_E^u.$ Applying this to
$\pi^t$ expressed by \eqref{eq: transpose of unitary} gives
\[
    \bc_{E/F}(\pi^t)=\times_{i=1}^m
    \times_{k=1}^{t_i}U(\delta[\sigma_{i,k},r_i],n_i).
\]
Applying \eqref{eq: transpose of unitary} now to $\bc_{E/F}(\pi)^t$
we obtain the identity \eqref{eq: bc and transpose commute}.
\end{proof}
It is straightforward from the definitions that
\begin{equation}\label{eq: trivial id}
    P_F(\rec_F(\pi))=P_E(\rec_E(\bc_{E/F}(\pi)),\ \pi \in \aaa_F.
\end{equation}
For $\pi \in \aaa_F^u,$ applying \eqref{eq: trivial id} to $\pi^t$
and then \eqref{eq: bc and transpose commute} we get that
\[
    P_F(\rec_F(\pi^t))=P_E(\rec_E(\bc_{E/F}(\pi)^t).
\]
The identity $\vv(\pi)=\vv(\bc_{E/F}(\pi))$ is now immediate from
\eqref{eq: def of vv}. This completes the proof of Theorem \ref{thm:
main}.

%\bibliographystyle{alpha}
%\def\bibdir{../Bibfiles/}
%\bibliography{\bibdir all.bib}
%\input{\bibdir shortbibline}

\end{document}